\definecolor{myblue}{rgb}{0.09,0.32,0.44} %
\newtheorem{thm}{Theorem}[section] %
\newtheorem*{thm*}{Theorem}
\newtheorem{cor}[thm]{Corollary}
\newtheorem{fact}[thm]{Fact}
\newtheorem{lem}[thm]{Lemma}
\newtheorem{prop}[thm]{Proposition}
\theoremstyle{remark}
\newtheorem{rem}[thm]{Remark}
\newtheorem*{rem*}{Remark}
\newtheorem*{rems*}{Remarks}
\newcommand\Cref[1]{{Corollary~\ref{#1}}}
\newcommand\Fref[1]{{Fact~\ref{#1}}}
\newcommand\Pref[1]{{Proposition~\ref{#1}}}
\newcommand\Rref[1]{{Remark~\ref{#1}}}
\newcommand\Tref[1]{{Theorem~\ref{#1}}}
\newcommand{\Z}{\mathbb{Z}}
\newcommand{\C}{\mathbb{C}}
\newcommand{\FF}{\mathbb{F}}
\newcommand{\Qu}{\mathcal{C}}
\newcommand{\set}[1]{\left\{#1\right\}}
\newcommand{\sub}{\subseteq}
\newcommand{\sg}[1]{\left\langle #1\right\rangle}
\newcommand{\suchthat}{\,\ifnum\currentgrouptype=16\middle\fi|\,}
\newcommand{\ab}[1]{#1^{\mathrm{ab}}}
\newcommand{\prof}[1]{\widehat{#1}}
\newcommand{\LL}{\mathcal{L}}
\newcommand\PSL[1][d]{{\operatorname{PSL}_{#1}}} %
\def\moverlay{\mathpalette\mov@rlay}
\def\mov@rlay#1#2{\leavevmode\vtop{%
   \baselineskip\z@skip \lineskiplimit-\maxdimen
   \ialign{\hfil$\m@th#1##$\hfil\cr#2\crcr}}}
\newcommand{\charfusion}[3][\mathord]{
    #1{\ifx#1\mathop\vphantom{#2}\fi
        \mathpalette\mov@rlay{#2\cr#3}
      }
    \ifx#1\mathop\expandafter\displaylimits\fi}
\newlength{\tempindent}
\newcommand{\lazyenum}{
\setlength{\tempindent}{\parindent}
\begin{enumerate}[leftmargin=0cm,itemindent=0.7cm,labelwidth=\itemindent,labelsep=0cm,align=left,label=(\arabic*)]
\setlength{\parskip}{\smallskipamount}
\setlength{\parindent}{\tempindent}
}
\title{Profinite rigidity of lamplighter groups}
\author{}
\author{Guy Blachar}
\address{Bar-Ilan University, Ramat Gan 5290001, Israel}
\address{The Weizmann Institute of Science, Rehovot 7610001, Israel}
\email{guy.blachar@gmail.com}
\subjclass[2020]{20E18, 20E22, 13C05}
\keywords{Profinite rigidity, lamplighter groups, wreath products.}
\begin{document}

\begin{abstract}
  We show that the lamplighter groups $(\Z/p\Z)^n\wr\Z$, where~$p$ is prime and $n\ge 1$ is a positive integer, are profinitely rigid.
\end{abstract}

\maketitle

\section{Introduction}

Let $\Gamma$ be a finitely generated residually finite group. The profinite completion of $\Gamma$ is the inverse limit of the inverse system of its finite quotients, i.e.\ $\prof{\Gamma}=\varprojlim_{N\vartriangleleft_f\Gamma}\Gamma/N$. One may ask the following question: is $\Gamma$ defined (up to isomorphism) by its profinite completion? In other words, given such a group $\Gamma$, we ask whether for any finitely generated residually finite group $\Delta$, an isomorphism $\prof{\Gamma}\cong\prof{\Delta}$ implies $\Gamma\cong\Delta$. Such groups are called \textbf{profinitely rigid}.

An equivalent formulation can be given in terms of the finite quotients of $\Gamma$. We let $\Qu(\Gamma)$ denote the set of isomorphism classes of all finite quotients of $\Gamma$. Note that for any finitely generated residually finite group $\Gamma$ we have $\Qu(\Gamma)=\Qu(\prof{\Gamma})$, so $\prof{\Gamma}\cong\prof{\Delta}$ implies $\Qu(\Gamma)=\Qu(\Delta)$. The other direction holds as well, by combining results of Dixon, Formanek, Poland and Ribes \cite{DFPR} and of Nikolov and Segal \cite{NS}. Therefore, a group $\Gamma$ is profinitely rigid if and only if $\Qu(\Gamma)=\Qu(\Delta)$ implies $\Gamma\cong\Delta$.\medskip

Any finitely generated abelian group is profinitely rigid. Other examples of profinitely rigid groups, studied by Bridson, McReynolds, Reid and Spitler, include some arithmetic lattices in $\PSL[2](\C)$ (see \cite{BMRS20}) and some triangle groups (see \cite{BMRS21}). Recently, works of Corson, Hughes, M{\"o}ller and Varghese \cite{CorsonHughesMollerVarghese24} and of Paolini and Sklinos~\cite{PaoliniSklinos24} show that affine Coxeter groups are profinitely rigid. In contrast, there are examples of non-isomorphic metacyclic groups $\Gamma_1,\Gamma_2$ for which $\prof{\Gamma_1}\cong\prof{\Gamma_2}$ (see the main theorem in~\cite{Ba}), and other examples of non-profinitely rigid groups such as in~\cite{CLRS}. We refer the reader to \cite{Re} for a survey on profinite rigidity.\medskip

The classical lamplighter group $\LL=(\Z/2\Z)\wr\Z$ and similar wreath products play an important role in group theory, as well as in the study of random walks. The subgroup lattice and profinite completion of $\LL$, and more generally of the groups $\LL_{n,p}=(\Z/p\Z)^n\wr\Z$ for a prime number $p$ and positive integer $n\ge 1$, were studied by Grigorchuk and Kravchenko~\cite{GK}. It is therefore natural to study these groups also in the context of profinite rigidity.\medskip

Our goal in this paper is to prove the following theorem:

\begin{thm}\label{thm:main}
  The group $\LL_{n,p}=(\Z/p\Z)^n\wr\Z$ is profinitely rigid for any prime $p$ and any positive integer $n\ge 1$.
\end{thm}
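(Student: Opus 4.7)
The plan is to use the hypothesis $\prof{\Delta}\cong\prof{\LL_{n,p}}$ (equivalently $\Qu(\Delta)=\Qu(\LL_{n,p})$) to reconstruct $\Delta$ as a lamplighter group. First I would show $\Delta$ is metabelian: every finite quotient of $\LL_{n,p}$ is metabelian, hence so is every finite quotient of $\Delta$, and combined with residual finiteness this forces $[[\Delta,\Delta],[\Delta,\Delta]]=1$. Next, $\ab{\Delta}$ is a finitely generated abelian group with profinite completion $(\Z/p\Z)^n\times\prof{\Z}$, so by profinite rigidity of finitely generated abelian groups, $\ab{\Delta}\cong(\Z/p\Z)^n\times\Z$.

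Let $\Delta_0\le\Delta$ be the preimage of the torsion subgroup $(\Z/p\Z)^n\le\ab{\Delta}$, so that $\Delta/\Delta_0\cong\Z$. Its closure $\widehat{\Delta_0}$ in $\prof{\Delta}$ equals the kernel of the continuous map $\prof{\Delta}\to\prof{\Z}$, and under the given isomorphism corresponds to the closure $\overline{B}\sub\prof{\LL_{n,p}}$ of the base $B:=\FF_p[t,t^{-1}]^n\sub\LL_{n,p}$. Since $B$ is abelian and of exponent $p$, so is $\overline{B}$, and hence so is $\Delta_0\sub\widehat{\Delta_0}$. Choosing any lift $t\in\Delta$ of a generator of the $\Z$-factor of $\ab{\Delta}$, conjugation by $t$ (well-defined independently of the lift, as $\Delta_0$ is abelian) endows $\Delta_0$ with the structure of a finitely generated $R$-module for $R:=\FF_p[t,t^{-1}]$.

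The main technical step is to prove $\Delta_0\cong R^n$ as $R$-modules. I would use the structure theorem for the PID $R$ to write $\Delta_0\cong R^k\oplus\bigoplus_i R/(f_i)$. The finite-index $\Delta$-normal subgroups of $\Delta_0$ are exactly the finite-index $R$-submodules, and those of the form $N\cap\Delta_0$ for $N\lhd_f\Delta$ form a cofinal system, so $\widehat{\Delta_0}$ equals the profinite completion of $\Delta_0$ as an $R$-module, namely $\prof{R}\otimes_R\Delta_0\cong\prof{R}^k\oplus\bigoplus_i R/(f_i)$. Comparing with $\overline{B}\cong\prof{R}^n$ and using that $\prof{R}=\prod_{\mathfrak{m}}R_{\mathfrak{m}}^{\wedge}$ is a product of complete DVRs --- and in particular is $R$-torsion-free --- forces the torsion part $\bigoplus_i R/(f_i)$ to vanish, and a rank comparison at any local factor then gives $k=n$. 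Finally, the extension $1\to\Delta_0\to\Delta\to\Z\to 1$ splits because $H^2(\Z;\Delta_0)=0$ ($\Z$ has cohomological dimension one), and the action of the splitting generator is multiplication by $t$, realizing $\Delta\cong R^n\rtimes\Z=\LL_{n,p}$.

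The main obstacle I expect lies in matching $\widehat{\Delta_0}$ with $\overline{B}$ as $\prof{R}$-modules under the abstract isomorphism $\prof{\Delta}\cong\prof{\LL_{n,p}}$: one must confirm that the $R$-action carried across --- originally defined via conjugation by the chosen lift in $\Delta$ --- coincides with the canonical one on $\overline{B}$, and verify that the profinite completion of a finitely generated $R$-module is given by $\prof{R}\otimes_R(-)$ so that the structure-theoretic invariants transfer faithfully from $\widehat{\Delta_0}$ back to $\Delta_0$.
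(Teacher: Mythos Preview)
Your plan is sound and takes a genuinely different route from the paper. Both arguments pass through the $R=\FF_p[x^{\pm1}]$-module structure on the kernel of the projection to $\Z$, but diverge at the decisive step. You aim to pin down $\Delta_0\cong R^n$ exactly, by identifying $\widehat{\Delta_0}$ with $\overline{B}\cong\prof{R}^n$ as $\prof{R}$-modules and reading off both torsion-freeness and rank. The paper instead only produces a \emph{surjection}: it replaces $\Gamma$ by a quotient $\Gamma_0$ (built from maps to $(\Z/p\Z)^n\wr(\Z/m\Z)$ with $p\nmid m$, so that the image of $N$ is forced into the base), writes $N\cong R^r\oplus(\text{torsion})$, and shows $r\ge n$ by an elementary $\FF_p$-dimension count inside one sufficiently large finite quotient. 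That yields an epimorphism $\Gamma\twoheadrightarrow\LL_{n,p}$, and the isomorphism is then imported from the Hopfian-type result \cite[Theorem~3]{DFPR}. So the paper never needs to show the torsion vanishes or that $r=n$.

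The obstacle you flag is real, and it is exactly what the paper's strategy sidesteps. The isomorphism $\prof{\Delta}\cong\prof{\LL_{n,p}}$ does restrict to $\widehat{\Delta_0}\cong\overline{B}$, but only as profinite groups equipped with a $\prof{\Z}$-action \emph{twisted by the induced automorphism of $\prof{\Z}$}: your generator $t$ corresponds to some topological generator $s\in\prof{\Z}$ acting on $\overline{B}$, with no reason for $s$ to lie in $\Z$. Since $\widehat{\Delta_0}$ and $\overline{B}$ are both just infinite products of copies of $\FF_p$ as profinite groups, the bare group isomorphism carries no information; everything rests on the module structure. To run your argument you must therefore check that the $s$-action on $\prof{R}^n$ still has no nonzero element annihilated by a polynomial in $s$, and still has rank $n$ in the appropriate sense. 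This can be done by analysing the element $x^s$ in each local factor $\prof{R_{\mathfrak m}}$, but it is genuine work. The second point you raise (that the $R$-module profinite completion is $\prof{R}\otimes_R(-)$) is routine for finitely generated modules over a PID. Once the twist issue is handled, your argument has the advantage of being self-contained, identifying $\Delta\cong\LL_{n,p}$ directly without appealing to \cite{DFPR}; the paper's route is more elementary at the cost of that external input.
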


The idea of the proof is as follows. Let $\Gamma$ be a finitely generated residually finite. If $\Qu(\Gamma)=\Qu(\LL_{n,p})$, the abelianizations of $\Gamma$ and $\LL_{n,p}$ must be isomorphic, i.e., we have $\ab{\Gamma}\cong\ab{\LL_{n,p}}\cong(\Z/p\Z)^n\times\Z$. We use this to get a semidirect product decomposition of a (quotient of) $\Gamma$ as $N\rtimes\Z$, and study the action of~$\Z$ on~$N$. We show that $N$ can be viewed as a finitely generated $\FF_p[x^{\pm 1}]$-module. We then utilize the structure theory of modules over $\FF_p[x^{\pm 1}]$, which is a principal ideal domain, to show that~$N$ can then be projected to the subgroup $\bigoplus_{\Z}(\Z/p\Z)^n$ of $\LL_{n,p}$ consistently with the $\Z$-actions. This induces a projection $\Gamma\to\LL_{n,p}$, and we conclude $\Gamma\cong\LL_{n,p}$ by \cite[Theorem 3]{DFPR}.

\section{\texorpdfstring{The groups $\LL_{n,p}$}{The groups L_n,p}}

In this section we recall some properties of the groups $\LL_{n,p}$.

Let $G,H$ be groups. The \textbf{(full) wreath product} (or \textbf{lamplighter group}) $H\wr G$ to be the semidirect product $\left(\bigoplus_G H\right)\rtimes G$, where~$G$ acts on $\bigoplus_G H$ by shifting the index, i.e.\ $g\cdot(h_a)_{a\in G}=(h_{g^{-1}a})$. We write elements of $G\wr H$ as pairs $(L,g)$, where $L\in\bigoplus_G H$ and $g\in G$. The group $G$ is called the \textbf{base group} of $H\wr G$, and the group $H$ is called the \textbf{lamp group} of~$H\wr G$. In particular, the group $H\wr G$ is generated by the copy of $G$ as its base group and one copy of $H$ as a direct summand in $\bigoplus_G H$.

The abelianization of lamplighter groups is given by the following claim:

\begin{prop}\label{prop:lamp-abel}
  For any two groups $G,H$ we have $\ab{(H\wr G)}=\ab{H}\times\ab{G}$.
\end{prop}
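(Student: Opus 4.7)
I will exhibit a surjection
\[
\phi: H \wr G \twoheadrightarrow \ab{H} \times \ab{G}, \qquad \phi(L, g) = \Bigl(\textstyle\sum_{a \in G} \overline{L(a)},\ \bar g\Bigr),
\]
where $\overline{(\cdot)}$ denotes projection to the respective abelianization (written additively on the $\ab{H}$ factor), and then show that $\ker\phi = [H \wr G, H \wr G]$. Since each $L \in \bigoplus_G H$ has finite support the sum is well-defined, and since the $G$-action on $\bigoplus_G H$ merely permutes coordinates, the sum is $G$-invariant; a short computation with the semidirect-product multiplication confirms $\phi$ is a homomorphism, and surjectivity is immediate from the formula.

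The inclusion $[H \wr G, H \wr G] \subseteq \ker \phi$ is automatic since the target is abelian. For the reverse inclusion, suppose $(L, g) \in \ker \phi$. Then $g \in [G, G]$, so $(\mathbf{1}, g)$ is a product of commutators in $H \wr G$, and writing $(L, g) = (L, 1)(\mathbf{1}, g)$ reduces the problem to the case $g = 1$. So I am left with: if $\sigma(L) := \sum_a \overline{L(a)} = 0$ in $\ab{H}$, then $(L, 1) \in [H \wr G, H \wr G]$.

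The key identity is that for any $a \neq b$ in $G$, any $h \in H$, and any $g \in G$ with $ga = b$,
\[
\bigl[\,(\mathbf{1}, g),\ (h \cdot e_a, 1)\,\bigr] \;=\; (h \cdot e_b,\,1) \cdot (h \cdot e_a,\,1)^{-1},
\]
where $h \cdot e_a \in \bigoplus_G H$ denotes the configuration equal to $h$ at position $a$ and trivial elsewhere, and $\mathbf{1}$ is the trivial lamp configuration. This is a direct calculation from the wreath product multiplication, and modulo $[H \wr G, H \wr G]$ it says that a single lamp's value can be moved freely between coordinates. Iterating over the finitely many positions in $\supp L$, I reduce $(L, 1)$ modulo commutators to an element $(\tilde h \cdot e_1, 1)$ with $\overline{\tilde h} = \sigma(L) = 0$, hence $\tilde h \in [H, H]$. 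Since the copy of $H$ at the identity coordinate is an embedded subgroup of $H \wr G$, $(\tilde h \cdot e_1, 1)$ is a product of commutators in $H \wr G$, completing the argument. The only technical point is setting up this transfer identity carefully; everything else is formal.
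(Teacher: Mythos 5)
Your proof is correct, but it takes a different and more hands-on route than the paper. The paper invokes the general formula for the abelianization of a semidirect product, $\ab{(K\rtimes G)}\cong(\ab{K})_G\times\ab{G}$ (coinvariants of $\ab{K}$ under the $G$-action), applied to $K=\bigoplus_G H$, and then identifies the coinvariants of $\bigoplus_G\ab{H}$ under the shift action with $\ab{H}$ via the summation map --- essentially a two-line argument once that formula is taken as known. You instead build the candidate quotient map $\phi(L,g)=(\sum_a\overline{L(a)},\bar g)$ explicitly and verify by direct commutator manipulation that $\ker\phi$ is exactly the commutator subgroup; your transfer identity $\bigl[(\mathbf{1},g),(h\cdot e_a,1)\bigr]=(h\cdot e_b,1)(h\cdot e_a,1)^{-1}$ is exactly the computation that, in the paper's language, shows the coinvariants relations $gk-k$ identify all coordinates of $\bigoplus_G\ab{H}$. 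Your version is self-contained and elementary (it does not presuppose the semidirect-product abelianization formula), at the cost of being longer; the paper's version is shorter but leans on a standard fact it does not reprove. Both are complete, and the small reductions you perform (splitting off the $G$-part using that $g\mapsto(\mathbf{1},g)$ is a section, and collecting the lamps at the identity coordinate where the residual element lands in $[H,H]$) are all sound.
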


\begin{proof}
  Write $K=\bigoplus_G H$, so that $H\wr G=K\rtimes G$. Then $\ab{(H\wr G)}=(\ab{K})_G\times\ab{G}$ where $\ab{K}=\bigoplus_G\ab{H}$ and $(\ab{K})_G=\ab{K}/\sg{gk-g\suchthat g\in G,k\in\ab{K}}=\ab{H}$, so the claim follows.
\end{proof}

Our groups of interest are $\LL_{n,p}=(\Z/p\Z)^n\wr\Z$ where $p$ is prime and $n\ge 1$ is a positive integer. The above proposition implies:

\begin{cor}\label{cor:Lnp-abel}
  $\ab{\LL_{n,p}}\cong(\Z/p\Z)^n\times\Z$.
\end{cor}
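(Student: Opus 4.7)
The plan is to apply \Pref{prop:lamp-abel} directly, with $H=(\Z/p\Z)^n$ and $G=\Z$. Since both $(\Z/p\Z)^n$ and $\Z$ are already abelian, their abelianizations are themselves, so the proposition immediately yields $\ab{\LL_{n,p}}=\ab{((\Z/p\Z)^n\wr\Z)}=\ab{(\Z/p\Z)^n}\times\ab{\Z}=(\Z/p\Z)^n\times\Z$.

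There is essentially no obstacle here; the only thing to verify is that $\ab{A}\cong A$ for an abelian group $A$, which is immediate from the definition of the abelianization as the quotient by the commutator subgroup. So the entire proof is a one-line instantiation of the preceding proposition.
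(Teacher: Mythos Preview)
Your proposal is correct and matches the paper's approach exactly: the paper states this corollary immediately after \Pref{prop:lamp-abel} without proof, since it is the direct instantiation $H=(\Z/p\Z)^n$, $G=\Z$ with both groups already abelian.
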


We now describe the finite quotients of $\LL_{n,p}$, in the spirit of \cite{GK}.

Let $\varphi\colon\LL_{n,p}\to F$ be an epimorphism where $F$ is a nontrivial finite group. Take the minimal $m\ge 1$ such that $(\boldsymbol{0},m)\in\ker\varphi$ (here $\boldsymbol{0}$ denotes the lamp configuration where all lamps are set to $0$). The kernel of $\varphi$ must contain the normal subgroup~$H_m$ generated by $(\boldsymbol{0},m)$, and thus $\varphi$ factors through $\LL_{n,p}/H_m\cong(\Z/p\Z)^n\wr(\Z/m\Z)$. %
Conversely, any quotient of $(\Z/p\Z)^n\wr(\Z/m\Z)$ is a quotient of~$\LL_{n,p}$. This proves the following:

\begin{cor}\label{cor:Qu-Lnp}
  For any prime $p$ and any $n\ge 1$ we have
  \[
    \Qu(\LL_{n,p})=\bigcup_{m=1}^{\infty}\Qu\big((\Z/p\Z)^n\wr(\Z/m\Z)\big).
  \]
\end{cor}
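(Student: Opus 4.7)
The plan is to establish the two inclusions separately, following the sketch given just before the statement.

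For the inclusion $\supseteq$, I would observe that for every $m\ge 1$ the quotient homomorphism $\Z\twoheadrightarrow\Z/m\Z$ induces, by functoriality of the wreath product in its base group, an epimorphism $\LL_{n,p}\twoheadrightarrow(\Z/p\Z)^n\wr(\Z/m\Z)$. Composing this epimorphism with any finite quotient of $(\Z/p\Z)^n\wr(\Z/m\Z)$ yields a finite quotient of $\LL_{n,p}$, proving this inclusion.

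For the reverse inclusion $\subseteq$, I would fix a surjection $\varphi\colon\LL_{n,p}\to F$ onto a nontrivial finite group. Since the image of the base group $\Z$ in $F$ is a finite cyclic subgroup, there exists a minimal $m\ge 1$ with $(\boldsymbol{0},m)\in\ker\varphi$. The normal subgroup $H_m\trianglelefteq\LL_{n,p}$ generated by $(\boldsymbol{0},m)$ then lies in $\ker\varphi$, so $\varphi$ factors through $\LL_{n,p}/H_m$. It therefore suffices to identify $\LL_{n,p}/H_m$ with $(\Z/p\Z)^n\wr(\Z/m\Z)$, since then $F$ will appear as a finite quotient of the latter group.

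The main technical step, and the only real obstacle I anticipate, is computing the normal closure $H_m$ explicitly. My plan is to view the lamp group $\bigoplus_\Z(\Z/p\Z)^n$ as the free $\FF_p[x^{\pm 1}]$-module $\FF_p[x^{\pm 1}]^n$, with $1\in\Z$ acting by multiplication by $x$. A direct calculation in the semidirect product yields the conjugation formula $(L,0)(\boldsymbol{0},m)(L,0)^{-1}=\bigl((1-x^m)L,\,m\bigr)$, and a similar calculation with a general conjugator $(M,j)$ confirms that the set $(1-x^m)\FF_p[x^{\pm 1}]^n\rtimes m\Z$ is already normal in $\LL_{n,p}$. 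This identifies it with $H_m$, and passing to the quotient---using the ring isomorphism $\FF_p[x^{\pm 1}]/(1-x^m)\cong\FF_p[\Z/m\Z]$ together with its shift action---produces $\LL_{n,p}/H_m\cong(\Z/p\Z)^n\wr(\Z/m\Z)$, completing the proof.
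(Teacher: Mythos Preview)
Your proof is correct and follows exactly the approach the paper takes in the paragraph preceding the statement: show each inclusion, with the forward one obtained by factoring any finite quotient through $\LL_{n,p}/H_m$ for the minimal $m$. The only difference is that you carry out the computation of the normal closure $H_m=(1-x^m)\FF_p[x^{\pm 1}]^n\rtimes m\Z$ and the resulting isomorphism $\LL_{n,p}/H_m\cong(\Z/p\Z)^n\wr(\Z/m\Z)$ in detail, whereas the paper simply asserts this isomorphism.
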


For this reason all of our arguments below will only use homomorphisms to finite lamplighters of the above form.

\section{Proof of the main theorem}

Let $\Gamma$ be a finitely generated residually finite group with $\Qu(\Gamma)=\Qu(\LL_{n,p})$. Our goal is to prove that $\Gamma\cong\LL_{n,p}$. We do this in several steps.

\subsection{A semidirect product decomposition}

In the first step of the proof, we find a quotient~$\Gamma_0$ of $\Gamma$ such that $\Gamma_0=N\rtimes\Z$ for an appropriate subgroup $N\le\Gamma_0$. Although one can show that  $\Gamma$ splits as a semidirect product of this form as well, it will be easier to work with the quotient $\Gamma_0$ we will present.

\begin{prop}\label{prop:gamma-abel}
  The abelianization of $\Gamma$ is $\ab{\Gamma}\cong(\Z/p\Z)^n\times\Z$.
\end{prop}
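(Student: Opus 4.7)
The plan is to reduce this immediately to the profinite rigidity of finitely generated abelian groups, a fact recalled in the introduction. Since $\Gamma$ is finitely generated, so is its abelianization $\ab{\Gamma}$. Thus both $\ab{\Gamma}$ and $\ab{\LL_{n,p}}$ are finitely generated abelian groups, and it suffices to show that they have the same finite quotients.

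The key observation is that for any group $G$, the set of (isomorphism classes of) finite \emph{abelian} quotients of $G$ coincides both with the subset of abelian groups in $\Qu(G)$ and with $\Qu(\ab{G})$. Applying this to $\Gamma$ and $\LL_{n,p}$, the hypothesis $\Qu(\Gamma)=\Qu(\LL_{n,p})$ gives
\[
  \Qu(\ab{\Gamma})=\Qu(\ab{\LL_{n,p}}).
\]
By \Cref{cor:Lnp-abel} the right-hand side equals $\Qu\bigl((\Z/p\Z)^n\times\Z\bigr)$.

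Since finitely generated abelian groups are profinitely rigid, $\Qu(\ab{\Gamma})=\Qu\bigl((\Z/p\Z)^n\times\Z\bigr)$ forces $\ab{\Gamma}\cong(\Z/p\Z)^n\times\Z$, as required. (If one prefers to invoke this directly: the isomorphism type of a finitely generated abelian group $A\cong\Z^r\oplus T$ can be read off from its finite quotients, e.g.\ the free rank $r$ is determined by $\dim_{\FF_\ell}(A\otimes\FF_\ell)$ for any prime $\ell$ not dividing $|T|$, which is visible from $\Qu(A)$, and the torsion summand $T$ is the intersection of kernels of maps $A\to\Z/m\Z$ with cyclic image of maximal order.)

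There is no real obstacle here: the argument is a bookkeeping step which transfers a profinite hypothesis from $\Gamma$ to its abelianization. The only thing to check carefully is that finite abelian quotients of $\Gamma$ factor through $\ab{\Gamma}$, which is immediate from the universal property of the abelianization.
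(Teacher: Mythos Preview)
Your proof is correct and follows essentially the same approach as the paper: the paper simply cites the known fact (from \cite[Remark~3.2]{Re}) that $\Qu(\Gamma)=\Qu(\Delta)$ implies $\ab{\Gamma}\cong\ab{\Delta}$, and then applies \Cref{cor:Lnp-abel}, whereas you unpack this cited fact by passing to finite abelian quotients and invoking profinite rigidity of finitely generated abelian groups. The content is the same; you have just made explicit what the paper outsources to a reference.
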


\begin{proof}
  It is known that if $\Qu(\Gamma)=\Qu(\Delta)$, then $\ab{\Gamma}\cong\ab{\Delta}$ (see e.g.\ \cite[Remark~ 3.2]{Re}); therefore, \Cref{cor:Lnp-abel} shows that $\ab{\Gamma}\cong\ab{\LL_{n,p}}\cong(\Z/p\Z)^n\times\Z$.
\end{proof}

Denote by $\mathcal{A}$ the set of all homomorphisms $\Gamma\to(\Z/p\Z)^n\wr(\Z/m\Z)$ for all positive integers~$m\ge 1$ satisfying $p\nmid m$. Define
\[
  \Gamma_0=\Gamma/\bigcap_{\varphi\in\mathcal{A}}\ker\varphi,
\]
which is a finitely generated residually finite group. By the definition of~$\Gamma_0$, for any $1\ne a\in\Gamma_0$ there are a positive integer $m\ge 1$ with $p\nmid m$ and a homomorphism $\varphi\colon\Gamma_0\to(\Z/p\Z)^n\wr(\Z/m\Z)$ such that $\varphi(a)\ne 1$. Writing $\mathcal{A}_0$ for the set of such homomorphisms, we have
\begin{equation}\label{eq:A0-kernel-cap}
  \bigcap_{\varphi\in\mathcal{A}_0}\ker\varphi=1.
\end{equation}

\begin{cor}
  The abelianization of $\Gamma_0$ is $\ab{\Gamma_0}\cong(\Z/p\Z)^n\times\Z$.
\end{cor}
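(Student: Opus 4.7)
The plan is to show that the normal subgroup $K = \bigcap_{\varphi \in \mathcal{A}} \ker \varphi$ used to define $\Gamma_0$ is contained in the commutator subgroup $[\Gamma, \Gamma]$. Once I have this, since $\Gamma_0 = \Gamma/K$ with $K \subseteq [\Gamma, \Gamma]$, the standard identification $\ab{\Gamma_0} = \Gamma/(K\cdot[\Gamma,\Gamma]) = \ab{\Gamma}$ together with \Pref{prop:gamma-abel} immediately finishes the job.

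To establish $K \subseteq [\Gamma,\Gamma]$, I will exhibit for each positive integer $m$ with $p \nmid m$ a specific element $\varphi_m \in \mathcal{A}$ whose kernel is the preimage of $\{0\} \times m\Z$ under the abelianization map $\Gamma \to \ab{\Gamma} \cong (\Z/p\Z)^n \times \Z$. The key construction is a diagonal embedding $\iota_m \colon (\Z/p\Z)^n \times \Z/m\Z \hookrightarrow (\Z/p\Z)^n \wr \Z/m\Z$ sending $(v,k)$ to the element whose lamp configuration is the constant vector $v$ at every position and whose base-group component is $k$. Since the shift action fixes constant configurations, $\iota_m$ is a well-defined injective group homomorphism. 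Setting $\varphi_m$ to be the composition
\[
  \Gamma \longrightarrow \ab{\Gamma} \cong (\Z/p\Z)^n \times \Z \longrightarrow (\Z/p\Z)^n \times \Z/m\Z \stackrel{\iota_m}{\longrightarrow} (\Z/p\Z)^n \wr \Z/m\Z,
\]
we obtain $\varphi_m \in \mathcal{A}$, and by injectivity of $\iota_m$ the kernel $\ker\varphi_m$ is exactly the preimage in $\Gamma$ of $\{0\} \times m\Z$.

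Intersecting over all $m$ coprime to $p$ (e.g.\ ranging over primes $q \neq p$) gives $\bigcap_{p \nmid m} m\Z = 0$ in $\Z$, whence $\bigcap_{p \nmid m} \ker \varphi_m = [\Gamma, \Gamma]$ and consequently $K \subseteq [\Gamma,\Gamma]$, as required. There is no real obstacle in this argument: the only mildly non-obvious ingredient is the diagonal embedding $\iota_m$, which realizes $(\Z/p\Z)^n \times \Z/m\Z$ not merely as a quotient but also as a subgroup of $(\Z/p\Z)^n \wr \Z/m\Z$. The coprimality hypothesis $p \nmid m$ in the definition of $\mathcal{A}$ plays essentially no role, since varying $m$ over primes different from $p$ already suffices.
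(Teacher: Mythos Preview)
Your argument is correct and takes a genuinely different route from the paper's. The paper argues by a squeeze: since $\Gamma\twoheadrightarrow\Gamma_0$ one has a surjection $(\Z/p\Z)^n\times\Z\twoheadrightarrow\ab{\Gamma_0}$, and since $\Qu(\Gamma)=\Qu(\LL_{n,p})$ there are surjections $\Gamma_0\twoheadrightarrow(\Z/p\Z)^n\wr(\Z/m\Z)$ for every $m$ coprime to $p$, giving surjections $\ab{\Gamma_0}\twoheadrightarrow(\Z/p\Z)^n\times(\Z/m\Z)$; the paper then concludes that $\ab{\Gamma_0}$ must equal $(\Z/p\Z)^n\times\Z$. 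You instead bypass the squeeze entirely by proving the stronger statement $K\subseteq[\Gamma,\Gamma]$, exploiting that $\mathcal{A}$ consists of \emph{all} homomorphisms (not only surjective ones) and exhibiting explicit members $\varphi_m\in\mathcal{A}$ factoring through $\ab{\Gamma}$ via the diagonal embedding $\iota_m$. Your approach is more self-contained: it does not invoke the hypothesis $\Qu(\Gamma)=\Qu(\LL_{n,p})$ beyond its use in \Pref{prop:gamma-abel}, and it sidesteps the final ``therefore'' in the paper's argument, which is a slightly delicate step (for instance, $(\Z/p\Z)^{n-1}\times\Z$ also surjects onto every $(\Z/p\Z)^n\times(\Z/m\Z)$ with $p\nmid m$, so the surjections alone do not force the conclusion without further use of the specific quotient map $\ab\Gamma\to\ab{\Gamma_0}$). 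The paper's approach, on the other hand, does not require constructing $\iota_m$ and stays closer to the surjective maps that are used later in the proof.
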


\begin{proof}
  The natural projection $\Gamma\to\Gamma_0$ translates to a projection $\ab{\Gamma}\to\ab{\Gamma_0}$, so by \Pref{prop:gamma-abel} there is a projection $(\Z/p\Z)^n\times\Z\to\ab{\Gamma_0}$. Also, by the choice of~$\Gamma_0$, for any $m\ge 1$ with $p\nmid m$ there is a projection $\Gamma_0\to(\Z/p\Z)^n\wr(\Z/m\Z)$, which translates to a projection $\ab{\Gamma_0}\to(\Z/p\Z)^n\times(\Z/m\Z)$. Therefore we must have $\ab{\Gamma_0}\cong(\Z/p\Z)^n\times\Z$.
\end{proof}

Let $\pi\colon\Gamma_0\to\ab{\Gamma_0}\cong(\Z/p\Z)^n\times\Z$ denote the natural projection to the abelianization, and let $\pi'\colon\Gamma_0\to\Z$ denote the projection of $\pi$ to its $\Z$-component. Since~$\Z$ is free, we have:

\begin{cor}
  We have $\Gamma_0=N\rtimes\Z$ for $N=\ker\pi'$.
\end{cor}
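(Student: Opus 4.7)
The plan is to exploit the fact that $\Z$ is a free group, so any surjection onto $\Z$ splits and produces a semidirect product decomposition of the total group.

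First I would verify that $\pi'$ is surjective. The projection $\pi\colon\Gamma_0\to\ab{\Gamma_0}$ is surjective by construction, and the identification $\ab{\Gamma_0}\cong(\Z/p\Z)^n\times\Z$ from the previous corollary shows that the further projection to the $\Z$-component is surjective; composing the two gives a surjection $\pi'\colon\Gamma_0\twoheadrightarrow\Z$.

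Next, since $\Z$ is free (equivalently, projective in the category of groups), the short exact sequence
\[
  1 \longrightarrow N \longrightarrow \Gamma_0 \xrightarrow{\pi'} \Z \longrightarrow 1
\]
splits. Concretely, I would pick any element $t\in\Gamma_0$ with $\pi'(t)=1$ and define a section $s\colon\Z\to\Gamma_0$ by $s(k)=t^k$. Since $\pi'(t^k)=k$, the subgroup $\langle t\rangle$ maps isomorphically onto $\Z$, so $\langle t\rangle\cap N=1$, and together with $N=\ker\pi'$ it generates $\Gamma_0$. Hence $\Gamma_0=N\rtimes\langle t\rangle\cong N\rtimes\Z$, with the semidirect product structure coming from conjugation by $t$.

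There is no real obstacle here; the statement is essentially the observation that extensions of $\Z$ split. The only mild subtlety is to be explicit that $\pi'$ is genuinely surjective and that the chosen lift $t$ has infinite order, both of which follow immediately from the abelianization calculation.
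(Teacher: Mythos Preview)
Your argument is correct and is exactly the approach the paper takes: the paper simply remarks that since $\Z$ is free the extension splits, and you have spelled out the details of that splitting.
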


\subsection{\texorpdfstring{The structure of $N$}{The structure of N}}

Our next step is understanding the properties of $N$ and the action of $\Z$ on $N$. To do this, we first study the images of $N$ under the maps in $\mathcal{A}_0$.

\begin{prop}\label{prop:image-hom-A0}
  Let $\varphi\colon\Gamma_0=N\rtimes\Z\to(\Z/p\Z)^n\wr(\Z/m\Z)$ be a homomorphism, where $p\nmid m$. Then $\varphi(N)\sub\bigoplus_{\Z/m\Z}(\Z/p\Z)^n$.
\end{prop}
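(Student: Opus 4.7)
The plan is to exploit the abelianization calculation together with the coprimality of $p$ and $m$. Consider the projection $q\colon (\Z/p\Z)^n\wr(\Z/m\Z)\to\Z/m\Z$ killing the base subgroup; its kernel is precisely $\bigoplus_{\Z/m\Z}(\Z/p\Z)^n$. So it suffices to show that the composition $q\circ\varphi\colon\Gamma_0\to\Z/m\Z$ is trivial on $N$.

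Since $\Z/m\Z$ is abelian, $q\circ\varphi$ factors through the abelianization and hence through a homomorphism $\psi\colon\ab{\Gamma_0}\cong(\Z/p\Z)^n\times\Z\to\Z/m\Z$ via the natural projection $\pi\colon\Gamma_0\to\ab{\Gamma_0}$. By the definition of $\pi'$ (it is the $\Z$-component of $\pi$) and of $N=\ker\pi'$, we have $\pi(N)\subseteq (\Z/p\Z)^n\times\{0\}$. Thus $q\circ\varphi(N)=\psi(\pi(N))$ is contained in the image $\psi((\Z/p\Z)^n\times\{0\})$.

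The final step is to observe that any homomorphism from the $p$-torsion group $(\Z/p\Z)^n$ to the cyclic group $\Z/m\Z$ of order coprime to $p$ is trivial: for $v\in(\Z/p\Z)^n\times\{0\}$ we have $pv=0$, so $p\cdot\psi(v)=0$, and since $\gcd(p,m)=1$ this forces $\psi(v)=0$. Therefore $q\circ\varphi(N)=0$, i.e.\ $\varphi(N)\subseteq\ker q=\bigoplus_{\Z/m\Z}(\Z/p\Z)^n$, as required. There is essentially no obstacle here; the entire argument is a short diagram chase once one notices that the hypothesis $p\nmid m$ is exactly what forces the $(\Z/p\Z)^n$-summand of $\ab{\Gamma_0}$ to die in $\Z/m\Z$.
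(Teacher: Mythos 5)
Your proof is correct and follows essentially the same route as the paper: both factor the relevant composite through $\ab{\Gamma_0}\cong(\Z/p\Z)^n\times\Z$ and use $\gcd(p,m)=1$ to kill the torsion part; your map $q$ is just the paper's abelianization $\rho$ composed with the further projection to $\Z/m\Z$, which makes the conclusion $\varphi(N)\subseteq\ker q$ marginally more direct. One terminological nit: in the paper's convention the kernel of $q$ is the \emph{lamp} subgroup $\bigoplus_{\Z/m\Z}(\Z/p\Z)^n$, not the base subgroup, but this does not affect the argument.
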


\begin{proof}
  Let $\rho\colon(\Z/p\Z)^n\wr(\Z/m\Z)\to(\Z/p\Z)^n\times(\Z/m\Z)$ denote the abelianization map. As $\rho\circ\varphi$ maps $\Gamma_0$ to an abelian group, it factors through the abelianization $\ab{\Gamma_0}\cong(\Z/p\Z)^n\times\Z$, i.e.\ there exists $\psi\colon(\Z/p\Z)^n\times\Z\to(\Z/p\Z)^n\times(\Z/m\Z)$ such that the following diagram commutes:
  \begin{center}
    \begin{tikzcd}
      \Gamma_0\arrow[r,"\varphi"]\arrow[d,"\pi"] & (\Z/p\Z)^n\wr(\Z/m\Z)\arrow[d,"\rho"]\\
      (\Z/p\Z)^n\times\Z\arrow[r,"\psi"] & (\Z/p\Z)^n\times(\Z/m\Z).
    \end{tikzcd}
  \end{center}

  Note that the assumption $p\nmid m$ implies that $\psi$ maps $(\Z/p\Z)^n$ to itself. The subgroup $N\le\Gamma_0$ is mapped by $\pi$ to $(\Z/p\Z)^n$, and thus it is mapped to $(\Z/p\Z)^n$ by $\psi\circ\pi=\rho\circ\varphi$. By the definition of $\rho$, it follows that $\varphi(N)\sub\bigoplus_{\Z/m\Z}(\Z/p\Z)^n$, as required.
\end{proof}

\begin{cor}
  The group $N$ is abelian and of exponent $p$.
\end{cor}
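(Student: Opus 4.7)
The plan is to combine \Pref{prop:image-hom-A0} with the separation property \eqref{eq:A0-kernel-cap} of the family $\mathcal{A}_0$. The key observation is that every $\varphi\in\mathcal{A}_0$ sends $N$ into the finite abelian group $\bigoplus_{\Z/m\Z}(\Z/p\Z)^n$, which has exponent~$p$. So any obstruction to $N$ being abelian of exponent $p$ must lie in the common kernel of all $\varphi\in\mathcal{A}_0$, and that common kernel is trivial.

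Concretely, I would fix $a,b\in N$ and an arbitrary $\varphi\in\mathcal{A}_0$ with target $(\Z/p\Z)^n\wr(\Z/m\Z)$, $p\nmid m$. By \Pref{prop:image-hom-A0}, $\varphi(a),\varphi(b)\in\bigoplus_{\Z/m\Z}(\Z/p\Z)^n$. Since this subgroup is abelian and of exponent $p$, one immediately reads off $\varphi([a,b])=[\varphi(a),\varphi(b)]=1$ and $\varphi(a^p)=\varphi(a)^p=1$. As $\varphi$ was an arbitrary element of $\mathcal{A}_0$, both $[a,b]$ and $a^p$ lie in $\bigcap_{\varphi\in\mathcal{A}_0}\ker\varphi$, which equals $1$ by \eqref{eq:A0-kernel-cap}. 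Hence $[a,b]=1$ and $a^p=1$ for all $a,b\in N$, proving both claims simultaneously.

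There is no real obstacle: the whole content was packaged into the construction of $\Gamma_0$ (ensuring the homomorphisms in $\mathcal{A}_0$ separate points) and into \Pref{prop:image-hom-A0} (ensuring $N$ lands in the ``lamp'' part, where the target is abelian of exponent $p$). The exclusion $p\nmid m$ in the definition of $\mathcal{A}_0$ is exactly what was needed in \Pref{prop:image-hom-A0} to force $\varphi(N)$ into the abelian exponent-$p$ piece, which is why one works with $\Gamma_0$ rather than $\Gamma$ itself.
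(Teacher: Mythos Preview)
Your proof is correct and follows essentially the same approach as the paper: both use \Pref{prop:image-hom-A0} together with \eqref{eq:A0-kernel-cap} to conclude that every commutator and every $p$-th power in $N$ lies in the trivial intersection of kernels. The paper phrases this more tersely (nontrivial elements of $N$ survive in some abelian exponent-$p$ quotient), while you spell out the commutator and $p$-th power computations explicitly, but the underlying argument is identical.
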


\begin{proof}
  Using \eqref{eq:A0-kernel-cap} and \Pref{prop:image-hom-A0}, the group $N$ is residually finite, and all of its nontrivial elements are nontrivial in some projection to $\bigoplus_{\Z/m\Z}(\Z/p\Z)^n$ for some~$m\ge 1$. The latter group is abelian and of exponent~$p$ for all $m$, which proves the corollary.
\end{proof}

It follows from the previous corollary that $N$ can be seen as an $\FF_p$-vector space. For this reason, we use an additive notation for the group operation of $N$. We write elements of $\Gamma_0$ as pairs $(a,k)$, where $a\in N$ and $k\in\Z$. In addition, $\Gamma_0=N\rtimes\Z$, so there is an action of $\Z$ on $N$ by automorphisms. We conclude that:

\begin{cor}\label{cor:N-mod}
  The group $N$ is a finitely generated $\FF_p[x^{\pm 1}]$-module, where the action of $x$ is induced from the action of $\Z$ on $N$ in $\Gamma_0$.
\end{cor}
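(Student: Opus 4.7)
The $\FF_p[x^{\pm 1}]$-module structure on $N$ is essentially free from the preceding corollaries: since $N$ is abelian of exponent $p$, it is an $\FF_p$-vector space, and the conjugation action of $(0,1) \in \Z$ on $N$ inside $\Gamma_0 = N \rtimes \Z$ gives an $\FF_p$-linear automorphism of $N$, which we declare to be multiplication by $x$. Its inverse provides multiplication by $x^{-1}$, and these extend $\FF_p$-linearly to an action of $\FF_p[x^{\pm 1}]$ on $N$. So only finite generation requires an argument.

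For finite generation, the plan is to leverage the fact that $\Gamma_0$ is finitely generated (being a quotient of the finitely generated group $\Gamma$). I would pick a finite generating set of $\Gamma_0$, write each generator in the form $(a_i, k_i)$ with $a_i \in N$ and $k_i \in \Z$, and set $t = (0,1)$; then $\{(a_1, 0), \ldots, (a_r, 0), t\}$ is again a generating set of $\Gamma_0$. I then claim that $a_1, \ldots, a_r$ generate $N$ as an $\FF_p[x^{\pm 1}]$-module. Indeed, given $(a,0) \in N$, express it as a word in these generators; because its $\Z$-component is zero, one can shuffle the $t$'s so as to combine each occurrence of $(a_i, 0)^{\pm 1}$ with its running $t$-prefix into a conjugate $t^{m}(a_i,0)^{\pm 1}t^{-m} = (\pm x^m a_i, 0)$. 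Hence $a$ is an $\FF_p[x^{\pm 1}]$-linear combination of $a_1, \ldots, a_r$.

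This step is not really a conceptual obstacle; it is simply the standard fact that in a semidirect product $N \rtimes H$, finite generation of the whole group (together with finite generation of $H$) forces $N$ to be finitely generated as an $H$-module. The only thing to verify carefully is the bookkeeping in the rewriting of a word in $\Gamma_0$ representing an element of $N$, which works precisely because the total $\Z$-exponent is zero.
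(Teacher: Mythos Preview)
Your proposal is correct and follows the same approach as the paper. The paper's proof is a two-line sketch: it notes $\FF_p[\Z]\cong\FF_p[x^{\pm 1}]$ and then asserts that finite generation of $\Gamma_0$ as a group implies finite generation of $N$ as an $\FF_p[x^{\pm 1}]$-module; you have simply unpacked that second assertion with the standard rewriting argument in $N\rtimes\Z$.
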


\begin{proof}
  This follows from the fact that $\FF_p[\Z]\cong\FF_p[x^{\pm 1}]$. As $\Gamma_0$ is finitely generated as a group, it follows that $N$ is finitely generated as an $\FF_p[x^{\pm 1}]$-module.
\end{proof}

From now on, let $R_p=\FF_p[x^{\pm 1}]$ denote the ring of Laurent polynomials in the variable $x$ over $\FF_p$. We recall some known properties of $R_p$; these facts are standard, and we refer the reader to \cite[Lemma 2.1]{GK} for proofs of these claims.

\begin{fact}\label{fact:R-facts}
  \begin{enumerate}
    \item The ring $R_p$ is a principal ideal domain.
    \item For any $0\ne f\in R_p$, the ring $R_p/fR_p$ is finite. If $f$ is a (standard) polynomial with $f(0)\ne 0$, then $\left|R_p/fR_p\right|=p^{\deg f}$.
  \end{enumerate}
\end{fact}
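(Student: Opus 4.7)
The plan is to leverage the well-known identification of $R_p$ as the localization of the Euclidean domain $\FF_p[x]$ at the multiplicative set $S=\{x^k:k\ge 0\}$. In particular, every element of $R_p$ can be written in the form $x^{-k}g$ for some $k\ge 0$ and $g\in\FF_p[x]$, and the units of $R_p$ are precisely $\FF_p^\times\cdot x^{\Z}$. These two observations will drive both halves of the fact.

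For part (1), I would argue directly (avoiding any general theorem about localizations). Given a nonzero ideal $I\subseteq R_p$, set $I_0=I\cap\FF_p[x]$; for any $f\in I$, some $x^kf$ lies in $\FF_p[x]$, so $I_0$ is a nonzero ideal of $\FF_p[x]$, hence principal, say $I_0=(g)_{\FF_p[x]}$. For arbitrary $f\in I$, choose $k\ge 0$ with $x^kf\in\FF_p[x]\cap I=I_0$, so $x^kf=gh$ for some $h\in\FF_p[x]$, whence $f=g\cdot(x^{-k}h)\in gR_p$. The reverse inclusion is clear, so $I=gR_p$.

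For part (2), since $x$ is a unit in $R_p$, I may replace $f$ by $g\in\FF_p[x]$ with $g(0)\ne 0$ without changing $fR_p$. The plan is to show that the natural ring map $\iota\colon\FF_p[x]\to R_p/gR_p$ is an isomorphism, which reduces the computation of $|R_p/gR_p|$ to the standard Euclidean algorithm computation $|\FF_p[x]/(g)|=p^{\deg g}$. For surjectivity: writing $g=g_0+xg_1$ with $g_0\in\FF_p^\times$, the identity $x\cdot(-g_0^{-1}g_1)\equiv 1\pmod{g}$ exhibits $x$ as a unit modulo $g$ with a polynomial inverse, so every $x^{-m}$ is congruent mod $g$ to a polynomial, and hence every element of $R_p$ is congruent to a polynomial. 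For injectivity: if $h\in\FF_p[x]$ and $h=gf$ for some $f\in R_p$, write $f=x^{-k}p(x)$ with $p\in\FF_p[x]$ and $p(0)\ne 0$ (the case $p=0$ being trivial). Then $x^kh=gp$; comparing constant terms yields $g(0)p(0)\ne 0$ on the right, forcing $k=0$, so $f\in\FF_p[x]$ and $h\in g\FF_p[x]$.

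There is no real obstacle: the entire argument is routine once one exploits that $x$ is a unit to reduce to the polynomial case, and that $g(0)\ne 0$ to pull $x^{-1}$ back to a polynomial modulo $g$. The only subtlety worth flagging is the injectivity step in part (2), where one must check that the quotient $R_p/gR_p$ does not acquire extra relations from the negative powers of $x$; the constant-term comparison handles this cleanly.
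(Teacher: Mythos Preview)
Your argument is correct. Note, however, that the paper does not actually prove this fact: it records the statement as standard and refers the reader to \cite[Lemma~2.1]{GK} for a proof. So there is no in-paper argument to compare against; you have supplied a self-contained elementary proof where the paper gives only a citation.

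One small slip in wording: you say the natural map $\iota\colon\FF_p[x]\to R_p/gR_p$ is an isomorphism, but $\FF_p[x]$ is infinite. What you go on to establish (and clearly intend) is that $\iota$ is surjective with kernel exactly $g\FF_p[x]$, so that the induced map $\FF_p[x]/(g)\to R_p/gR_p$ is the isomorphism. Also, in the injectivity step your constant-term comparison really forces $k\le 0$ rather than $k=0$, but either way $f=x^{-k}p\in\FF_p[x]$ and the conclusion stands. These are cosmetic; the mathematics is sound.
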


Combining \Cref{cor:N-mod} with \Fref{fact:R-facts}, we see that $N$ is a finitely generated module over a principal ideal domain, and thus we may deduce the following:

\begin{cor}
  There exist $r,s\ge 0$ and $0\ne f_1,\dots,f_s\in R_p$ such that $f_1|\cdots|f_s$ and
  \begin{equation}\label{eq:N-decomp}
    N\cong R_p^r\times R_p/f_1R_p\times\cdots\times R_p/f_sR_p
  \end{equation}
  as $R_p$-modules. We further assume that $f_i\in\FF_p[x]$ and $f_i(0)\ne 0$, by multiplying~$f_i$ with an appropriate power of $x$.
\end{cor}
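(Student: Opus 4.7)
The plan is a direct application of the structure theorem for finitely generated modules over a principal ideal domain. By \Cref{cor:N-mod}, $N$ is a finitely generated $R_p$-module, and by \Fref{fact:R-facts}(1), $R_p$ is a PID. The invariant factor form of the structure theorem will then produce integers $r,s\ge 0$ and nonzero $f_1,\dots,f_s\in R_p$ with $f_1\mid\cdots\mid f_s$ realizing the decomposition \eqref{eq:N-decomp}.

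The only remaining task is to normalize the Laurent polynomials $f_i$ so that they lie in $\FF_p[x]$ with nonzero constant term. For this I will use that $x$ is a unit in $R_p=\FF_p[x^{\pm 1}]$: any nonzero $f\in R_p$ factors uniquely as $f=x^{-k}\tilde f$ with $\tilde f\in\FF_p[x]$ and $\tilde f(0)\ne 0$, where $-k$ is the least exponent appearing in~$f$. Since $x^{-k}$ is a unit, we have $fR_p=\tilde f R_p$, so $R_p/fR_p$ is naturally identified with $R_p/\tilde fR_p$. Divisibility in $R_p$ corresponds to reverse inclusion of principal ideals, so the chain $f_1\mid\cdots\mid f_s$ becomes $\tilde f_1\mid\cdots\mid\tilde f_s$ after normalization. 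Replacing each $f_i$ by $\tilde f_i$ therefore yields the decomposition in the required form.

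I do not expect a genuine obstacle here: the substance of this corollary is just the structure theorem for finitely generated modules over a PID, together with the cosmetic normalization enabled by the invertibility of $x$ in $R_p$.
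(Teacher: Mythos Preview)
Your proposal is correct and matches the paper's approach exactly: the paper simply invokes the structure theorem for finitely generated modules over a PID (using \Cref{cor:N-mod} and \Fref{fact:R-facts}(1)) and then normalizes each $f_i$ by a power of the unit $x$, just as you do. Your write-up in fact spells out the normalization step more carefully than the paper itself.
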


\begin{rem}\label{rem:sem-Rn-Z}
  As an $\FF_p$-vector space, $R_p=\bigoplus_{i=-\infty}^{\infty}\FF_px^i$, and the action of $x$ on~$R_p$ (by multiplication) induces a shift of the direct summands. Therefore, letting $\Z$ act on~$R_p^n$ by the action of $x$, we have $R_p^n\rtimes\Z\cong\LL_{n,p}$.
\end{rem}

\subsection{\texorpdfstring{Determining the module structure of $N$}{Determining the module structure of N}}

Our next goal is to prove that, as $R_p$-modules, there is an epimorphism $N\to R_p^n$ (or equivalently that $r\ge n$).

Fix $m\ge\sum_{i=1}^s\deg f_i+2$ such that $p\nmid m$. Let $\varphi\colon\Gamma_0\to(\Z/p\Z)^n\wr(\Z/m\Z)$ be a projection. Applying \Pref{prop:image-hom-A0}, for any $a\in N$ and $k\in\Z$ we have
\[
  \varphi(a,k)=\varphi((a,0)(0,k))=\varphi(a,0)\varphi(0,k)=(\psi(a)+g(k),f(k))
\]
for some group homomorphisms $\psi\colon N\to(\Z/p\Z)^n\wr(\Z/m\Z)$ and $f\colon\Z\to\Z/m\Z$, and a function $g\colon\Z\to(\Z/p\Z)^n\wr(\Z/m\Z)$ with $g(0)=0$. Since $\varphi$ is onto, $f$ must also be onto, and thus $f(1)\in(\Z/m\Z)^{\times}$. By composing $\varphi$ with an automorphism of $(\Z/p\Z)^n\wr(\Z/m\Z)$, we may assume that $f(1)=1$, i.e.\ $f(k)=k$ for all $k\in\Z$.

The action of $\Z/m\Z$ on $\bigoplus_{\Z/m\Z}(\Z/p\Z)^n$ induced from the lamplighter structure endows $\bigoplus_{\Z/m\Z}(\Z/p\Z)^n$ with an $R_p$-module structure, which is isomorphic to $(R_p/(x^m-1)R_p)^n$ as $R_p$-modules (in a similar manner to \Rref{rem:sem-Rn-Z}). Under this operation, we note the following:

\begin{lem}
  The function $\psi\colon N\to\bigoplus_{\Z/m\Z}(\Z/p\Z)^n$ is a homomorphism of $R_p$-modules.
\end{lem}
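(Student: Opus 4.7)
The plan is to verify $R_p$-linearity of $\psi$, since we already know (from the construction of $\psi$ as the $N$-component of $\varphi$ restricted to $N$) that $\psi$ is a group homomorphism. Concretely, I need to check that $\psi(x\cdot a)=x\cdot\psi(a)$ for every $a\in N$, where on the left the action comes from $\Z$-conjugation inside $\Gamma_0$, and on the right from the shift action on $\bigoplus_{\Z/m\Z}(\Z/p\Z)^n$.

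First I would translate the $R_p$-action on $N$ into group-theoretic terms: $(x\cdot a,0)=(0,1)(a,0)(0,-1)$ in $\Gamma_0=N\rtimes\Z$. Applying $\varphi$ and using the given formula $\varphi(b,k)=(\psi(b)+g(k),k)$ together with the wreath-product multiplication $(L_1,k_1)(L_2,k_2)=(L_1+k_1\cdot L_2,k_1+k_2)$, I would expand
\[
  \varphi\bigl((0,1)(a,0)(0,-1)\bigr)=\bigl(g(1)+x\cdot\psi(a)+x\cdot g(-1),\,0\bigr),
\]
so that $\psi(x\cdot a)=x\cdot\psi(a)+\bigl(g(1)+x\cdot g(-1)\bigr)$.

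The remaining point is to show that $g(1)+x\cdot g(-1)=0$. For this I would use the restriction of $\varphi$ to the subgroup $\{(0,k):k\in\Z\}\le\Gamma_0$: since $\varphi(0,k_1+k_2)=\varphi(0,k_1)\varphi(0,k_2)$, the function $g$ satisfies the cocycle identity $g(k_1+k_2)=g(k_1)+x^{k_1}\cdot g(k_2)$. Specializing to $k_1=1$, $k_2=-1$ and recalling $g(0)=0$ gives exactly $g(1)+x\cdot g(-1)=0$. Substituting back yields $\psi(x\cdot a)=x\cdot\psi(a)$, and since $R_p=\FF_p[x^{\pm 1}]$ is generated as a ring by $x$ and $x^{-1}$, this suffices to conclude $R_p$-linearity.

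I do not expect any real obstacle here: the whole argument is a short unwinding of the wreath-product multiplication combined with the standard cocycle relation for $g$. The only mild subtlety is being careful that $x$ acts on $\bigoplus_{\Z/m\Z}(\Z/p\Z)^n$ as the image of $1\in\Z$ under the identification with $(R_p/(x^m-1)R_p)^n$, which is exactly the normalization $f(k)=k$ achieved just before the lemma.
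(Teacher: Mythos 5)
Your proof is correct and takes essentially the same approach as the paper: both verify $\psi(xa)=x\psi(a)$ by applying $\varphi$ to a product in $\Gamma_0$ and expanding with the wreath-product multiplication. The only difference is that you conjugate, computing $\varphi\bigl((0,1)(a,0)(0,-1)\bigr)$, which forces you to also establish the cocycle identity $g(k+k')=g(k)+x^k g(k')$ to kill the leftover term $g(1)+x\,g(-1)$ (the paper proves that identity as a separate subsequent lemma), whereas the paper simply compares $\varphi\bigl((0,1)(a,0)\bigr)=\varphi(xa,1)$ with $\varphi(0,1)\varphi(a,0)$ so that the $g(1)$ terms cancel immediately.
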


\begin{proof}
  It is enough to verify that for any $a\in N$ we have $\psi(xa)=x\psi(a)$. Indeed, for any $a\in N$
  \[
    \varphi((0,1)(a,0))=\varphi(xa,1)=(\psi(xa)+g(1),1)
  \]
  whereas
  \[
    \varphi(0,1)\varphi(a,0)=(g(1),1)(\psi(a),0)=(g(1)+x\psi(a),1),
  \]
  and the result follows.
\end{proof}

\begin{lem}
  For any $k,k'\in\Z$ the function $g$ satisfies $g(k+k')=g(k)+x^kg(k')$.
\end{lem}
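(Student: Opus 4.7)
The plan is to compute $\varphi(0, k+k')$ in two different ways and compare the lamp-configuration coordinates. First I would recall, using the identification of $\bigoplus_{\Z/m\Z}(\Z/p\Z)^n$ with $(R_p/(x^m-1)R_p)^n$ from \Rref{rem:sem-Rn-Z}, that the multiplication in $(\Z/p\Z)^n\wr(\Z/m\Z)$ takes the form
\[
(L_1, j_1)\cdot(L_2, j_2) = (L_1 + x^{j_1}L_2,\ j_1+j_2),
\]
since the shift action of $j_1\in\Z/m\Z$ on a lamp configuration becomes multiplication by $x^{j_1}$ under this identification.

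Next I would evaluate $\varphi(0, k+k')$ directly. Using the formula $\varphi(a,k) = (\psi(a) + g(k), k)$ together with the fact that $\psi(0) = 0$, one gets $\varphi(0, k+k') = (g(k+k'),\ k+k')$. On the other hand, factoring $(0, k+k') = (0, k)(0, k')$ in $\Gamma_0$ and applying $\varphi$ with the multiplication rule above yields
\[
\varphi(0, k)\varphi(0, k') = (g(k), k)\cdot(g(k'), k') = \bigl(g(k) + x^k g(k'),\ k+k'\bigr).
\]
Equating first coordinates gives $g(k+k') = g(k) + x^k g(k')$, as desired.

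This is essentially a one-line computation, and the only step that requires any attention is translating the wreath-product multiplication into $R_p$-module language; once that is done the rest is formal. Conceptually, the identity being proved is exactly the $1$-cocycle condition for $g$ with respect to the action of $\Z$ on $(R_p/(x^m-1)R_p)^n$ by multiplication by $x$, which is precisely what one should expect given that the restriction of $\varphi$ to the $\Z$-factor $\{0\}\times\Z\le\Gamma_0$ is a homomorphism into the semidirect product $(\Z/p\Z)^n\wr(\Z/m\Z)$.
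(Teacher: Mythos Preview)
Your proof is correct and is essentially identical to the paper's own argument: both compute $\varphi(0,k+k')$ directly and via $\varphi(0,k)\varphi(0,k')$, then equate first coordinates. Your additional remarks about the wreath-product multiplication rule and the $1$-cocycle interpretation are accurate context but not needed for the argument itself.
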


\begin{proof}
  We have
  \[
    \varphi((0,k)(0,k'))=\varphi(0,k+k')=(g(k+k'),k+k')
  \]
  while
  \[
    \varphi(0,k)\varphi(0,k')=(g(k),k)(g(k'),k')=(g(k)+x^kg(k'),k+k'),
  \]
  showing that $g(k+k')=g(k)+x^kg(k')$.
\end{proof}

\begin{cor}
  For any $k\in\Z$ we have $g(km)=kg(m)$.
\end{cor}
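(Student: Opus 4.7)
The plan is to exploit the fact that $g$ takes values in $(R_p/(x^m-1)R_p)^n$, so the element $x^m$ acts as the identity there. Combined with the cocycle relation from the previous lemma, this makes $g$ behave like an ordinary homomorphism when restricted to multiples of $m$, and the result will follow by a short induction.

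First I would handle $k \ge 0$ by induction. The base cases $g(0)=0$ and $g(m)=g(m)$ are immediate. For the inductive step, apply the cocycle identity with $k'$ replaced by $m$:
\[
  g((k+1)m) = g(km + m) = g(km) + x^{km} g(m).
\]
Since $x^m$ is the identity on $(R_p/(x^m-1)R_p)^n$, we have $x^{km} = (x^m)^k = 1$ acting on $g(m)$, so this simplifies to $g(km) + g(m) = (k+1) g(m)$ by the induction hypothesis.

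For $k < 0$, first I would derive from the cocycle relation (taking $k' = -k$) that
\[
  0 = g(0) = g(k) + x^k g(-k),
\]
so $g(-k) = -x^{-k} g(k)$. Applying this with $k$ replaced by $|k| m$ and again using $x^{\pm m} = 1$ in the target, one obtains $g(-|k|m) = -g(|k|m) = -|k| g(m)$, completing the case of negative $k$.

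I do not expect any serious obstacle here: the only substantive input is the identity $x^m = 1$ in $(R_p/(x^m-1)R_p)^n$, which is immediate from the definition of the $R_p$-module structure on $\bigoplus_{\Z/m\Z}(\Z/p\Z)^n$ noted just before the lemmas; the rest is a routine telescoping via the cocycle relation.
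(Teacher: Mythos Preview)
Your proposal is correct and takes essentially the same approach as the paper: both use the cocycle relation together with the fact that $x^m$ acts trivially on the target. The paper's version is organized slightly differently---it applies the cocycle with first argument $m$ to get $g(m+k)=g(m)+g(k)$ for all $k\in\Z$ at once, which handles positive and negative $k$ uniformly without a separate case---but the content is the same.
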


\begin{proof}
  The action of $x^m$ on $\bigoplus_{\Z/m\Z}(\Z/p\Z)^n$ is trivial, and thus
  \[
    g(m+k)=g(m)+x^mg(k)=g(m)+g(k).
  \]
  The conclusion follows since $g(0)=0$.
\end{proof}

The preimage under $\varphi$ of $\bigoplus_{\Z/m\Z}(\Z/p\Z)^n$ %
is precisely $N\rtimes(m\Z)$. As $\varphi$ is onto, we must have
\[
  \psi(N)+\FF_p g(m)=\set{\psi(a)+g(km)\suchthat a\in N,k\in\Z}=\bigoplus_{\Z/m\Z}(\Z/p\Z)^n.
\]
In particular, the image $\psi(N)\sub\bigoplus_{\Z/m\Z}(\Z/p\Z)^n$ is a submodule of codimension at most $1$ (as $\FF_p$-vector spaces).

\begin{prop}
  The parameter $r$ from \eqref{eq:N-decomp} satisfies $r\ge n$.
\end{prop}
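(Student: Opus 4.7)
The plan is to argue by contradiction: assuming $r \leq n-1$, I would derive incompatible upper and lower bounds on $|\psi(R_p^r)|$. The lower bound comes from the fact that $\psi(N)$ has $\FF_p$-codimension at most $1$ in the target while the torsion part of $N$ contributes only a small image; the upper bound comes from $\psi(R_p^r)$ being generated by too few elements as an $R_p$-module.

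First I would exploit the assumption $p \nmid m$ to decompose the target. The polynomial $x^m - 1$ factors into distinct monic irreducibles $h_1,\ldots,h_t \in \FF_p[x]$, so by CRT $R_p/(x^m-1)R_p \cong \prod_{j=1}^{t} R_p/h_j R_p$ is a product of fields. Consequently every $R_p$-submodule of $(R_p/(x^m-1)R_p)^n$ splits as a product $\prod_j V_j$ with $V_j \subseteq (R_p/h_j R_p)^n$ a subspace over the field $R_p/h_j R_p$, and the minimal number of $R_p$-generators of such a submodule equals $\max_j \dim_{R_p/h_j R_p} V_j$. Note also $\sum_j \deg h_j = m$.

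Setting $T = R_p/f_1 R_p \oplus \cdots \oplus R_p/f_s R_p$ (the torsion summand of $N$), $M' = \psi(R_p^r)$, and $T' = \psi(T)$, I would use $\psi(N) = M' + T'$ together with $|T'| \leq |T| = p^{\sum_i \deg f_i}$, $|\psi(N)| \geq p^{mn-1}$, and the choice $m \geq \sum_i \deg f_i + 2$ to obtain
\[
|M'| \geq \frac{|\psi(N)|}{|T'|} \geq p^{mn - 1 - \sum_i \deg f_i} \geq p^{(n-1)m + 1}.
\]
If on the other hand $r \leq n-1$, then $M'$ is generated by at most $n-1$ elements as an $R_p$-module, so each $V'_j$ in its product decomposition has dimension at most $n-1$ over $R_p/h_j R_p$, yielding
\[
|M'| \leq p^{(n-1)\sum_j \deg h_j} = p^{(n-1)m}.
\]
The two bounds contradict each other, so $r \geq n$.

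The one genuine subtlety is the generator-count observation that bridges the module structure of $M'$ with the constraint $r \leq n-1$; everything else is bookkeeping, and the choice $m \geq \sum_i \deg f_i + 2$ in the setup was arranged precisely to make the two bounds incompatible.
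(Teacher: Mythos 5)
Your proof is correct, and at its numerical core it coincides with the paper's: both arguments play the lower bound $\dim_{\FF_p}\psi(N)\ge nm-1$ (codimension at most $1$) against an upper bound of the shape $rm+\sum_i\deg f_i$, and then invoke the choice $m\ge\sum_i\deg f_i+2$ to force $r\ge n$. The difference lies in how the upper bound is produced. The paper observes that $\psi$ factors through $N/(x^m-1)N\cong(R_p/(x^m-1)R_p)^r\times\prod_i R_p/\gcd(f_i,x^m-1)R_p$ and counts $\FF_p$-dimensions in one stroke. You instead split $N$ into its free and torsion summands, bound the torsion contribution by $|\psi(T)|\le p^{\sum_i\deg f_i}$, and control $|\psi(R_p^r)|$ by using separability of $x^m-1$ (valid precisely because $p\nmid m$), the CRT decomposition of $R_p/(x^m-1)R_p$ into a product of fields, and a generator count over each field factor. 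Every step of this checks out, but the semisimplicity detour is heavier than necessary: since $x^m-1$ annihilates $\psi(R_p^r)$ and that module is generated by $r$ elements, it is a quotient of $(R_p/(x^m-1)R_p)^r$ and hence has order at most $p^{rm}\le p^{(n-1)m}$ directly. What your phrasing buys is a clean separation of the two sources of the bound (few generators for the free part, small total size for the torsion part), whereas the paper absorbs both into the single computation of $\dim_{\FF_p}N/(x^m-1)N$.
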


\begin{proof}
  Recall that $\bigoplus_{\Z/m\Z}(\Z/p\Z)^n\cong(R_p/(x^m-1)R_p)^n$ as $R_p$-modules; therefore we may view $\psi$ as an $R_p$-modules homomorphism $\psi\colon N\to(R_p/(x^m-1)R_p)^n$. By the above explanation, $\psi(N)$ is of codimension at most $1$ in $(R_p/(x^m-1)R_p)^n$. Since $\psi$ is a homomorphism of $R_p$-modules and $x^m-1$ annihilates its image, $\psi$ must split through a homomorphism $\overline{\psi}\colon N/(x^m-1)N\to (R_p/(x^m-1)R_p)^n$.

  Write $g_i=\gcd(f_i,x^m-1)$ for each $1\le i\le s$. By \eqref{eq:N-decomp}, we have
  \[
    N/(x^m-1)N \cong (R_p/(x^m-1)R_p)^r \times R_p/g_1R_p \times \cdots \times R_p/g_sR_p.
  \]
  Write also $N'=\overline{\psi}(N/(x^m-1)N)=\psi(N)$. On the one hand,
  \[
    \dim_{\FF_p} N' \le \dim_{\FF_p} N = rm+\sum_{i=1}^s\deg g_i\le rm+\sum_{i=1}^s \deg f_i.
  \]
  On the other hand, since $N'$ has codimension at most $1$ in $(R_p/(x^m-1)R_p)^n$, it follows that
  \[
    \dim_{\FF_p} N' \ge \dim_{\FF_p}(R_p/(x^m-1)R_p)^n - 1= nm - 1.
  \]
  Combining the inequalities yields
  \[
    (n-r)m \le \sum_{i=1}^s\deg f_i + 1.
  \]
  Since $m$ was chosen so that $m>\sum_{i=1}^s\deg f_i+1$, we must have $r\ge n$.
\end{proof}

\begin{cor}\label{cor:N-epi-Rn}
  There is an epimorphism $N\to R_p^n$ of $R_p$-modules.
\end{cor}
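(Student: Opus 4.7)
The plan is to read the epimorphism off directly from the structure theorem decomposition established in~\eqref{eq:N-decomp}, using the inequality $r\geq n$ just proved.

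More precisely, the previous proposition gives $r\geq n$, and by~\eqref{eq:N-decomp} we have an isomorphism of $R_p$-modules
\[
  N \cong R_p^r \times R_p/f_1R_p \times \cdots \times R_p/f_sR_p.
\]
I would compose this isomorphism with the natural $R_p$-linear map that kills every torsion summand $R_p/f_iR_p$ and projects $R_p^r$ onto its first $n$ coordinates; explicitly, the map sending $(v_1,\dots,v_r,u_1,\dots,u_s) \mapsto (v_1,\dots,v_n)$. This is visibly a surjective $R_p$-module homomorphism $R_p^r \times R_p/f_1R_p \times \cdots \times R_p/f_sR_p \to R_p^n$, so the composition gives the desired epimorphism $N\to R_p^n$.

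There is essentially no obstacle here: the corollary is just packaging the inequality $r\geq n$ into module-theoretic language via the standard projection onto a free summand. All of the substantive work, namely controlling the images of $N$ under the maps in $\mathcal{A}_0$ and deriving $r\geq n$ from a dimension count, has already been done in the preceding proposition.
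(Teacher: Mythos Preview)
Your argument is correct and is exactly what the paper intends: the corollary is stated without proof immediately after the proposition establishing $r\ge n$, and the epimorphism is simply the projection of the decomposition~\eqref{eq:N-decomp} onto $n$ of its free summands. There is nothing to add.
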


\subsection{Conclusion of the proof of \Tref{thm:main}}

We are now ready to finish the proof of the main theorem. Recall from \Rref{rem:sem-Rn-Z} that $R_p^n\rtimes\Z\cong\LL_{n,p}$, where the $\Z$-action on $R_p^n$ is induced by the action of $x\in R_p$ on $R_p^n$.

By \Cref{cor:N-epi-Rn}, there is an epimorphism of $R_p$-modules $\phi\colon N\to R_p^n$. Consider the homomorphism $f\colon\Gamma_0\to R_p^n\rtimes\Z\cong\LL_{n,p}$ given by $f(a,k)=(\phi(a),k)$. This is indeed a homomorphism, since
\begin{align*}
  f((a,k)(a',k'))&=f(a+x^ka',k+k')=(\phi(a+x^ka'),k+k')=\\
  &=(\phi(a)+x^k\phi(a'),k+k')=\\
  &=(\phi(a),k)(\phi(a'),k')=f(a,k)f(a',k').
\end{align*}
It is also clear that $f$ is onto, and thus there is an epimorphism $\Gamma_0\to R_p^n\rtimes\Z\cong\LL_{n,p}$. Composing $f$ with the natural projection $\Gamma\to\Gamma_0$ yields an epimorphism $\Gamma\to\LL_{n,p}$. In addition, $\Qu(\Gamma)=\Qu(\LL_{n,p})$ by assumption. We may therefore apply \cite[Theorem~3]{DFPR} and conclude that $\Gamma\cong\LL_{n,p}$, as required.

\bibliographystyle{plain}
\bibliography{prof_refs}

\end{document}